\documentclass[12pt]{amsart}
\usepackage[margin=1.5in]{geometry}
\usepackage{amsthm,amsmath,amssymb}
\theoremstyle{plain}
\newtheorem{theorem}{Theorem}[section]
\newtheorem{lemma}[theorem]{Lemma}
\newtheorem{corollary}[theorem]{Corollary}
\newtheorem{question}[theorem]{Question}
\theoremstyle{definition}
\newtheorem{remark}[theorem]{Remark}
\numberwithin{equation}{section}
\newcommand{\gen}[1]{\langle#1\rangle}
\newcommand{\biggen}[1]{\left\langle#1\right\rangle}
\newcommand{\abs}[1]{\lvert#1\rvert}
\newcommand*{\set}[1]{\{#1\}}
\newcommand*{\defset}[2]{\{\,#1\,\mid\,#2\,\}}
\newcommand{\QQ}{\mathbb Q}
\newcommand{\FF}{\mathbb F}
\newcommand{\Fqs}{\FF_q^\times}
\newcommand{\bx}{\bar{x}}

\newcommand{\bH}{\bar{H}}

\DeclareMathOperator{\PGL}{PGL}
\DeclareMathOperator{\PSL}{PSL}
\DeclareMathOperator{\SL}{SL}
\DeclareMathOperator{\SU}{SU}
\DeclareMathOperator{\PGammaL}{P\Gamma L}

\DeclareMathOperator{\trace}{tr}
\DeclareMathOperator{\adj}{adj}
\DeclareMathOperator{\aut}{Aut}
\usepackage{xcolor} 
\usepackage{url}
\usepackage[T1]{fontenc}
\usepackage{lmodern}
\usepackage[
  backend=biber,
  style=numeric,
  doi=true,
  url=false,
  giveninits=true,
  isbn=false
]{biblatex}
\addbibresource{mybib.bib}
\usepackage[colorlinks,linkcolor=blue,citecolor=purple]{hyperref}
\title{Transitive sets of derangements in primitive actions of $\PSL_2(q)$}
\author{Peter M\"uller}
\address{Institute of Mathematics, University of W\"urzburg}
\email{peter.mueller@uni-wuerzburg.de}
\begin{document}
\begin{abstract}
  Problem 8.75 of the Kourovka Notebook \cite{kourovka_notebook},
  attributed to John G.~Thompson, asks the following: Suppose $G$ is a
  finite primitive permutation group on $\Omega$, and $\alpha$,
  $\beta$ are distinct points of $\Omega$. Does there exist an element
  $g\in G$ such that $\alpha^g=\beta$ and $g$ fixes no point of
  $\Omega$? A recent negative example is given in
  \cite{mueller___8.75}, where $G$ is the Steinberg triality group
  ${}^{3}D_{4}(2)$ acting primitively on $4{,}064{,}256$ points. At
  present this is the only negative example known. In this note we
  show that almost simple primitive permutation groups with socle
  isomorphic to $\PSL_2(q)$ do not give negative examples.
\end{abstract}
\maketitle
\section{Introduction}
It is an easy exercise using the orbit-counting theorem to show that
if $G$ is a transitive permutation group on a finite set $\Omega$,
then the set of derangements of $G$ generates a transitive subgroup.

It is also easy to see that in general the set of derangements,
together with the identity, need not be transitive on $\Omega$. A
small example is the alternating group $A_4$ acting on the $2$-subsets
of $\set{1,2,3,4}$: An element $g\in A_4$ moving $\set{1,2}$ to
$\set{3,4}$ moves every element in its natural degree $4$ action, so
it is a double transposition, and therefore fixes some $2$-subset.

While there are many more such examples, it seemed hard to find one
under the additional assumption that $G$ acts primitively. In fact the
following Problem 8.75 of the Kourovka Notebook
\cite{kourovka_notebook}, attributed to John G.~Thompson and
designated ``A known problem'', was open for more than 40 years.
\begin{question}\label{q:8.75}
  Suppose $G$ is a finite primitive permutation group on $\Omega$, and
  $\alpha$, $\beta$ are distinct points of $\Omega$. Does there exist
  an element $g\in G$ such that $\alpha^g=\beta$ and $g$ fixes no
  point of $\Omega$?
\end{question}
In \cite{mueller___8.75} we give a negative answer. The group there is
the Steinberg triality group ${}^{3}D_{4}(2)$ of order
$211{,}341{,}312 = 2^{12}\cdot 3^{4}\cdot 7^{2}\cdot 13$ in a
primitive permutation action on a set of size $4{,}064{,}256$.

Using Magma \cite{magma}, we checked that this is the only example if
we assume that $G$ is almost simple with socle of size less than or
equal $20{,}158{,}709{,}760$. It would have been computationally too
expensive to check all the groups with socle $\PSL_2(q)$ in this
range. The purpose of this note is to show that these groups do not
produce negative examples. In fact a somewhat stronger result holds.
\begin{theorem}\label{t:main}
  Let $q\ge4$ be a prime power and $G$ be almost simple with socle
  $S=\PSL_2(q)$, that is $S\le G\le\aut(S)$. Suppose that $G$ acts
  primitively on $\Omega$. Then for every distinct
  $\alpha, \beta\in\Omega$ there exists $s\in S$ such that
  $\alpha^s=\beta$ and $s$ fixes no point of $\Omega$.
\end{theorem}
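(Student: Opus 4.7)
The plan is to reduce the statement to a coset-covering question inside $S$ and then split by the type of point stabiliser. Since the socle of a primitive almost simple group is transitive on $\Omega$, I fix $\alpha\in\Omega$, set $M = S_\alpha = S\cap G_\alpha$, and identify $\Omega$ with $S/M$. Writing $\beta = \alpha^t$ with $t\in S$, the elements of $S$ carrying $\alpha$ to $\beta$ form the right coset $tM$. Because $S\trianglelefteq G$, an element $s\in S$ has a fixed point on $\Omega = G/H$ precisely when it lies in some $G$-conjugate of $M$, so the theorem amounts to showing
\[
 tM \not\subseteq \bigcup_{g\in G} M^g \qquad\text{for every } t\in S.
\]

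I would then enumerate the possibilities for $M$ via Dickson's classification of subgroups of $\PSL_2(q)$, together with the known action of $\aut(S)$ on those subgroups. The relevant families are: a Borel subgroup; the normaliser of a split or non-split torus (a dihedral group); a subfield subgroup $\PSL_2(q_0)$ or $\PGL_2(q_0)$ with $q = q_0^r$; and, for small $q$, one of the exceptional subgroups $A_4$, $S_4$, $A_5$. In each case I would exhibit an explicit derangement in $tM$ by controlling the trace of a lift $\tilde{s}\in\SL_2(q)$. The $\PSL_2(q)$-conjugacy class of a semisimple element is essentially determined by its trace $\tau$, and whether $\tau^2-4$ is zero, a nonzero square, or a non-square (with an Artin--Schreier variant in characteristic $2$) determines the cycle structure on $\mathbb{P}^1(\FF_q)$, and hence the fixed-point set in each of the standard permutation actions of $S$. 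The technical core is the linear-algebra observation that, as $m$ ranges over $M$, the trace $\trace(tm)$ sweeps over enough of $\FF_q$ to hit a value corresponding to a derangement; for the Borel and dihedral actions this is a one-line computation, and for the subfield actions it reduces to a counting argument against $|\FF_{q_0}|$.

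The principal obstacle will be the subfield case. When $G$ contains field automorphisms fusing several $S$-classes of $\PSL_2(q_0)$-subgroups, the set $\bigcup_{g\in G} M^g$ is strictly larger than the union of $S$-conjugates of $M$, and the trace criterion alone is insufficient. I would circumvent this by choosing $s\in tM$ whose order admits a primitive prime divisor of $q^k-1$ for some $k$ not dividing $|\aut(\PSL_2(q_0))|$, so that $s$ cannot lie in any $G$-conjugate of $M$; Zsygmondy's theorem supplies such an element for all but finitely many pairs $(q,q_0)$, and the remaining cases--together with the exceptional subgroups for small $q$--can be dispatched by direct computation. A final bookkeeping step checks that the coset $tM$ and this element of prescribed order are compatible, which again reduces to a trace-matching calculation inside $\SL_2(q)$.
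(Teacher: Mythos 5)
Your reduction (identify the elements mapping $\alpha$ to $\beta$ with a coset $tM$, note that $s\in S$ has a fixed point iff $s$ lies in some $G$-conjugate of $M=S\cap G_\alpha$, then run through the classification of the possible stabilizers) is the same skeleton as the paper's proof, and the idea of controlling $\trace(tm)$ for lifts to $\SL_2(q)$ is also the right one. The gap is that the step you call the "technical core" is asserted, not proved, and for the dihedral stabilizers it is nothing like "a one-line computation". When $M$ is the normalizer of a split torus, the traces realized by lifts of elements of $M$ form the set $\defset{r+1/r}{r\in\Fqs}\cup\set{0}$, of size roughly $q/2$; what has to be shown is that the roughly $2q$ values $ar+d/r$ and $-cr+b/r$ (for $x=\bigl(\begin{smallmatrix}a&b\\c&d\end{smallmatrix}\bigr)$ a lift of $t$) cannot all land in a set of density about $1/2$ unless $x$ itself lies in the preimage of $M$. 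No pigeonhole or "sweeping" argument sees this: the paper converts the condition into counting affine points on a curve $Y^2=f(X)$ with $\deg f=4$ and invokes the Hasse bound (after a birational reduction to Weierstrass form valid in every odd characteristic), and even then only for $q\ge23$, with the remaining small $q$ checked separately — several of them are genuine exceptions to the auxiliary lemma, and $q=9$ is eliminated only by the sharper criterion that traces $\pm2$ cannot occur. The nonsplit torus needs a further Cayley-transform parametrization of the norm-one circle before the same machinery applies. Without an input of Hasse–Weil strength, your dihedral cases do not follow.

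Two further problems. The polyhedral stabilizers are not a small-$q$ phenomenon: $A_4$, $S_4$, $A_5$ occur as point stabilizers for infinitely many $q$ (for instance $S_4$ for every prime $q\equiv\pm1\pmod{8}$), so they cannot be "dispatched by direct computation"; the paper needs a uniform argument, namely trace identities for the binary polyhedral groups $2A_4$, $2S_4$, $2A_5$, for which your proposal offers no substitute. Finally, the Zsygmondy device for the subfield case is both broken and unnecessary: element orders in $\PSL_2(q)$ divide $q$ or $(q\pm1)/\gcd(2,q-1)$, so a primitive prime divisor of $q^k-1$ for larger $k$ typically does not occur as an element order at all, and even if a suitable order existed you would still have to prove that the particular coset $tM$ contains an element of that order — which is exactly the difficulty you have not addressed. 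In fact the subfield case is the easy one: the trace criterion gives $\trace(hx)\in\FF_{q_0}$ for all $h\in\SL_2(q_0)$, and since the four elementary matrices are $\FF_{q_0}$-linear combinations of elements of $\SL_2(q_0)$, this forces the entries of $x$ into $\FF_{q_0}$; fusion of classes by field automorphisms is harmless here because $\FF_{q_0}$ is stable under them.
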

If $G$ acts primitively on $\Omega$, then the possibilities for a point
stabilizer $S_\omega < S$ are known. While some cases are relatively
easy to handle, the more difficult ones arise when $S_\omega$ is
dihedral, or when $S_\omega$ is one of the polyhedral groups $A_4$,
$S_4$, or $A_5$. In the former case we require Hasse’s bound on the
number of points on elliptic curves over finite fields, and to rule out
the latter cases we derive trace identities for the binary polyhedral
groups $2A_4$, $2S_4$, and $2A_5$.

In some cases there are lengthy calculations which, in principle,
could be carried out by hand. Instead, we provide SageMath code at
\cite{dera_psl2_verify} that confirms the claims.
\section{Quartic hyperelliptic curves}
If $D$ is a plane algebraic curve over a field $F$ given by
$P(X, Y)\in F[X, Y]$, then $D(F)=\defset{(x, y)\in F^2}{P(x,y)=0}$
denotes the set of affine $F$-points on $D$.

Let $f(X)=a_0+a_1X+a_2X^2+a_3X^3+a_4X^4\in F[X]$ be a separable
polynomial of degree $4$ over a field $F$ of characteristic $\ne2$,
and $C$ be the plane algebraic curve defined by $Y^2=f(X)$. It is well
known that $C$ is $F$-birationally equivalent to an elliptic curve $E$
in Weierstraß normal form if and only if $C(F)$ is not empty or $a_4$
is a square in $F$.

As to explicit transformation formulae, there does not seem to be much
in the literature. The formulae in \cite[Chapter 10, Theorem
2]{Mordell:DE} and \cite{weil___remarques} do not cover the
characteristic $3$ case. Also, these sources do not discuss how $C(F)$
is related to $E(F)$. This is done in \cite[Section 3]{harada_lang},
but the treatment is somewhat complicated, it does not work in
characteristic $3$, and also fails in arbitrary characteristic (as
noted in \cite{connell___addendum}) if, in the notation from above,
$a_0$ is a square and $4a_0^2a_2=a_1^2$.

The purpose of this section is to provide a complete treatment of a
birational map $C\to E$, thereby keeping track of rational
points on both curves.

First, there are some easy transformations: If $(x,y)\in C(F)$, then a
shift in $X$ allows to assume $x=0$. So $a_0=\alpha^2$ and
$(\alpha, y)\in C(F)$. Upon replacing $X$ and $Y$ with $1/X$ and
$Y/X^2$ we may assume that $a_4=\alpha^2$ is a square. If $a_4=0$,
then we arrived at a Weierstraß form. Thus assume $a_4\ne0$. Upon
replacing $Y$ with $\alpha Y$, we may assume that $a_4=1$. Another
shift in $X$ allows to assume $a_3=0$. So we may assume that
$f(X)=a_0+a_1X+a_2X^2+X^4=(a_2/2+X^2)^2+a_1X+a_0-a_2^2/4$. Writing
$a=a_2^2/4-a_0$, $b=a_1$, and $c=a_2/2$, we see that we are reduced to
study curves of the form $Y^2=(X^2+c)^2 + bX - a$.

The transformation formulae below arise from a simplified version of
\cite{connell___addendum}, combined with an idea from \cite[Chapter
8(iii)]{cassels___ec}.

\begin{lemma}
  Let $f(X)=(X^2+c)^2 + bX - a\in F[X]$ be a separable polynomial over
  a field $F$ of characteristic $\ne2$. Then $g(U)=U^3 -4cU^2 + 4aU +
  b^2\in F[U]$ is separable.

  Let $C$ be the plane algebraic curve defined by $Y^2=f(X)$ and $E$ be
  the elliptic curve defined by $V^2=g(U)$.

  Consider the bivariate maps
  \begin{align*}
    \Phi:\;(X, Y) &\mapsto (U, V)=(2(X^2 + Y +c), 2XU + b)\\
    \Psi:\;(U, V) &\mapsto (X, Y)=\left(\frac{V - b}{2 U}, -X^{2}%
                    + \frac{1}{2}(U-2c)\right).
  \end{align*}
  Then the following holds:
  \begin{itemize}
  \item[(a)] If $b=0$, then $\Phi$ induces a bijective map
    $C(F)\to E(F)\setminus\set{(0, 0)}$ with inverse map induced by
    $\Psi$. (Note that $(0, 0)\in E(F)$ in this case.)
  \item[(b)] If $b\ne0$, then $(0,\pm b)\in E(F)$ and
    $(x_0, \pm y_0)\in C(F)$ for $x_0=\frac{a}{b}$,
    $y_0=c+x_0^2$. Furthermore, $\Phi$ induces a bijective map
    $C(F)\setminus\set{(x_0,-y_0)}\to E(F)\setminus\set{(0, -b), (0,
      b)}$ with inverse map induced by $\Psi$.
  \end{itemize}

  In particular, $\abs{C(F)}=\abs{E(F)}-1$ in both cases.
\end{lemma}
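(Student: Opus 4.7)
\emph{Proof plan.} The first step is to verify the key polynomial identity
\[ V^2 - g(U) = U\bigl(U^2 - 4YU + 4bX - 4a\bigr) \]
obtained by substituting $U = 2(X^2+Y+c)$ and $V = 2XU+b$. The clean way to do this is to note that $U^2 - 4YU = U(U-4Y) = 4(X^2+Y+c)(X^2-Y+c) = 4\bigl((X^2+c)^2 - Y^2\bigr)$; using the defining equation $Y^2 = (X^2+c)^2 + bX - a$ of $C$, this collapses to $4(a-bX)$, which cancels the remaining terms in the bracket. Hence $\Phi$ sends $C(F)$ into $E(F)$. A mirror computation shows that $\Psi$ sends $E(F) \cap \{U\ne 0\}$ into $C(F)$, and the compositions $\Psi\circ\Phi$ and $\Phi\circ\Psi$ reduce by direct substitution to the identity, establishing mutually inverse bijections between the appropriate subsets on each side.

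The second step is to classify the points where $U = 0$ on each curve. On $C$, $U = 0$ forces $Y = -(X^2+c)$, hence $Y^2 = (X^2+c)^2$; comparing with $Y^2 = f(X)$ gives $bX = a$. If $b = 0$, this forces $a = 0$, but then $f = (X^2+c)^2$ would fail to be separable, a contradiction. So in case (a) no point of $C(F)$ has $U = 0$, while on $E$ the unique $U = 0$ point is $(0,0)$ since $g(0) = b^2 = 0$; the claimed bijection $C(F)\to E(F)\setminus\{(0,0)\}$ follows at once. If $b\ne 0$, the unique point on $C$ with $U = 0$ is $(x_0,-y_0)$ with $x_0 = a/b$ and $y_0 = x_0^2 + c$, and direct substitution gives $\Phi(x_0,-y_0) = (0,b)$; meanwhile the partner point $(x_0, y_0)$ has $U = 4y_0 \ne 0$ and is mapped elsewhere. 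Conversely, the points of $E$ with $U = 0$ are exactly $(0,\pm b)$, and the identity $V = 2XU + b = b$ whenever $U = 0$ in the image of $\Phi$ shows that $(0,-b)$ has no preimage at all. This yields case (b), and in either case the bijections give $|C(F)| = |E(F)| - 1$ by subtracting $1$ or $2-1$.

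For the separability of $g$, one route is to recognize $g$ (up to scaling) as the resolvent cubic of the depressed quartic $f$, for which $\mathrm{disc}(g) = \mathrm{disc}(f)$ classically, so separability transfers. Alternatively a direct argument: a double root $u_0\ne 0$ of $g$ necessarily occurs with $V_0 = 0$, and applying $\Psi$ would produce a point of $C$ at which $Y = 0$ and $f'(X) = 0$, contradicting separability; a double root $u_0 = 0$ forces $a = b = 0$, the already-excluded inseparable case. The main obstacle I anticipate is the delicate bookkeeping in case (b) — in particular the asymmetric fact that $(x_0, -y_0)$ must be excluded from $C(F)$ while $(x_0, y_0)$ maps to a nonexceptional point, and that $(0,-b)\in E(F)$ has no preimage under $\Phi$ despite the apparent sign symmetry of the target. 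Once these sign subtleties are pinned down, the counting identity and both bijections follow mechanically.
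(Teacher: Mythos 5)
Your proposal is correct and takes essentially the same route as the paper: verify the polynomial identities showing $\Phi$ and $\Psi$ are mutually inverse between the loci with $U\ne 0$, classify the points with $U=0$ on each curve (using separability of $f$ in case (a), and the sign asymmetry $V=b$ when $U=0$ in case (b)), and deduce separability of $g$ from the equality of discriminants with $f$ (i.e.\ the resolvent cubic), with the $\Phi,\Psi$-based argument as the alternative the paper also mentions. The only nitpick is your aside that $(x_0,y_0)$ has $U=4y_0\ne0$: this fails when $y_0=0$, but then $(x_0,y_0)=(x_0,-y_0)$ is the excluded point, so the bijection claim is unaffected.
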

\begin{proof}
  The separability of $g(U)$ follows from that of $f(X)$, because both
  polynomials have the same discriminant. (Alternatively, this could
  also be derived without calculation of discriminants from the
  properties of $\Phi$ and $\Psi$.)
  
  Direct computations show that $\Phi(x,y)\in E(F)$ for all
  $(x,y)\in C(F)$ and $\Psi(u, v)\in C(F)$ for all
  $(u, v)\in E(F)\setminus\set{(0, -b), (0, b)}$.

  One verifies the identities $\Phi(\Psi(U, V))=(U, V)$ and
  $\Psi(\Phi(X, Y))=(X, Y)$ of bivariate rational functions; these
  identities hold without assuming the relations $V^2=g(U)$ or
  $Y^2=f(X)$.

  \begin{itemize}
    \item[(a)] Suppose that $b=0$. We need to show that there is no
      $(x, y)\in C(F)$ with $(u, v)=\Phi(x, y)=(0, 0)$. Suppose
      otherwise. From $y^2 - f(x) = 0$ and $u = 0$ we compute
  \begin{align*}
    0 &=%
        2(y^2-f(x)) + (x^2 - y + c)u\\
      &= 2(y^2 - x^4 - 2cx^2 - c^2 + a)%
        + (x^2 - y + c)(2x^2 + 2y + 2c)\\
    &= 2a. 
  \end{align*}
  Hence $a=b=0$, which contradicts the separability of $f(X)$.
\item[(b)] Now suppose that $b\ne0$. Assume that
  $(u, v)=\Phi(x,y )=(0, \pm b)$ for some $(x, y)\in C(F)$. From $u=0$
  we get $y=-(x^2+c)$. Plugging this into $y^2=f(x)$ gives
  $x=\frac{a}{b}=x_0$ and then $y=-(x^2+c)=-y_0$. The claim follows.
\end{itemize}
\end{proof}
\begin{lemma}\label{l:hasse}
  Let $q$ be an odd prime power and let $f\in\FF_q[X]$ be a separable
  polynomial of degree $\le4$. Then the number of pairs
  $(x,y)\in\FF_q^2$ with $y^2=f(x)$ is at most $q+1+2\sqrt{q}$.
\end{lemma}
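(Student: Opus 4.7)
The plan is to split by $\deg f$ and reduce each case to Hasse's bound for an elliptic curve. The cases $\deg f\le 2$ are elementary: $Y^2=f(X)$ defines (the affine part of) a line or a smooth plane conic over $\FF_q$, and a direct count yields at most $q+1$ affine $\FF_q$-points, well within the claimed bound. For $\deg f=3$, after rescaling $Y$ (which, if the leading coefficient of $f$ is a non-square, merely replaces the curve by a quadratic twist to which the same Hasse bound applies) the equation $Y^2=f(X)$ is an elliptic curve in Weierstrass form with a unique rational point at infinity, so Hasse's theorem gives at most $q+2\sqrt{q}$ affine points.

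The main case is $\deg f=4$, where I would reduce to the preceding lemma. If $C(\FF_q)=\emptyset$ the bound is trivial, so pick $(x_0,y_0)\in C(\FF_q)$; shifting $X$ to move $x_0$ to the origin makes $a_0=y_0^2$ a square, and the substitution $(X,Y)\mapsto(1/X,Y/X^2)$ then puts this square into the leading coefficient. A further rescaling of $Y$ to make the leading coefficient $1$ and a shift of $X$ to kill the cubic term bring the curve into the normal form $Y^2=(X^2+c)^2+bX-a$ of the preceding lemma, which supplies an explicit bijection with an elliptic curve $E$ in Weierstrass form, minus one or two affine points; this gives $\abs{C(\FF_q)}=\abs{E(\FF_q)}-1$. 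Hasse's theorem then bounds $\abs{E(\FF_q)}\le q+2\sqrt{q}$ in affine form, yielding $\abs{C(\FF_q)}\le q-1+2\sqrt{q}$, which is stronger than the claimed bound.

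The main technical nuisance is that $X\mapsto 1/X$ is not a bijection on $\FF_q^2$, so strictly speaking one has to verify that the $\FF_q$-point counts on the shifted and on the inverted affine curve differ only by the contributions from $x=0$ on each side, and that these contributions are compensated cleanly by the ``$-1$'' appearing in the preceding lemma. Conceptually this is immediate: all the transformations above are birational equivalences that induce isomorphisms on the smooth projective genus-one models, so the argument ultimately reduces to Hasse's bound applied to a single elliptic curve, and the small discrepancies in affine counts at $x=0$ and at infinity cancel.
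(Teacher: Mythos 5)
Your overall route is the same as the paper's (reduce the quartic case via the shift--invert--rescale transformations to the normal form of the preceding lemma, then apply Hasse to the associated elliptic curve), but your final bookkeeping step is wrong, and in a way that matters. You claim that the discrepancies in affine counts produced by $(X,Y)\mapsto(1/X,Y/X^2)$ ``cancel cleanly'', so that $\abs{C'(\FF_q)}=\abs{C(\FF_q)}=\abs{E(\FF_q)}-1\le q-1+2\sqrt q$. They do not cancel: the inversion deletes the $1$ or $2$ points of the shifted curve with $X$-coordinate $0$ (these exist, since they are precisely the rational point you chose), while it only creates points with $X$-coordinate $0$ on the new curve when the leading coefficient $a_4$ of the original quartic is a nonzero square in $\FF_q$. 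Equivalently, on the smooth projective model the original curve has two rational points at infinity exactly when $a_4$ is a square; when $a_4$ is a non-square it has none, and then the original affine curve can have two more points than your accounting allows. The correct count is $\abs{C'(\FF_q)}\le\abs{C(\FF_q)}+2=\abs{E(\FF_q)}+1\le q+1+2\sqrt q$, which is exactly the bound of the lemma and is how the paper argues. Your ``stronger'' bound $q-1+2\sqrt q$ is in fact false: as the paper's remark notes, for $q=5$ the curve $y^2=3x^4+1$ (leading coefficient a non-square) has $10$ affine points, exceeding both $q-1+2\sqrt q$ and $q+2\sqrt q$.

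The rest is fine: the $\deg f\le2$ and $\deg f=3$ cases are handled as in the paper (for the cubic you do not even need the twist remark, since the affine substitution $(x,y)\mapsto(ax,ay)$ turns $Y^2=aX^3+\dots$ into a monic Weierstrass equation without changing the affine count, and in any case the Hasse--Weil bound applies). So your proof becomes correct, with the lemma's stated bound rather than an improvement, once you replace the claimed cancellation by the honest estimate that the inversion step changes the affine count by at most $2$.
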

\begin{proof}
  If $f$ has degree $\le2$, then an even better upper bound holds. If
  $\deg f=3$, then $Y^2=f(X)$ is an elliptic curve and Hasse's bound
  gives the slightly better upper bound $q+2\sqrt{q}$. (Note that we
  only count the affine points of the curve.)

  Now suppose that $\deg f=4$ and that $y^2=f(x)$ for some
  $x,y\in\FF_q$. Let $C'$ be the curve given by $Y^2=f(X)$. By the
  transformations described above, we transform $C'$ to the curve $C$
  as in the previous lemma. The only transformation on the way from
  $C'$ to $C$ which changes the number of affine $\FF_q$-points is the
  map $(X,Y)\mapsto(1/X,Y/X^2)$, which removes the (at most $2$)
  points with $X$-coordinate $0$. Thus
  $\abs{C'(\FF_q)}\le\abs{C(\FF_q)}+2$. Let $E$ be the elliptic curve
  from the previous lemma. So $\abs{C(\FF_q)}=\abs{E(\FF_q)}-1$. Thus
  $\abs{C'(\FF_q)}\le\abs{E(\FF_q)}+1$, and the claim follows from the
  Hasse bound for $\abs{E(\FF_q)}\le q+2\sqrt{q}$.
\end{proof}
\begin{remark}
  \begin{itemize}
  \item[(a)] One cannot improve the bound further. For instance, for
    $q=5$ there are $10>q+2\sqrt{q}$ pairs $(x,y)\in\FF_q^2$ with
    $y^2=3x^4+1$.
  \item[(b)] An alternative, more theoretical proof of the lemma could
    use the following: If $C$ is an arbitrary absolutely irreducible
    projective curve over $\FF_q$, and $\tilde C$ is a nonsingular
    curve which is equivalent to $C$ by a birational map over $\FF_q$,
    then $n(C)\le n(\tilde C)+r$, where $n$ counts the number of
    projective $\FF_q$-points, and $r$ is the number of singularities
    of $C$. See the proof of \cite[Proposition 2.3]{aubry_perret}.
  \end{itemize}
\end{remark}
\section{Some field theoretic lemmata}
\begin{lemma}
  Let $q$ be a prime power with $q\ge23$ if $q$ is odd. Suppose that
  for $a, d\in\FF_q$ the following holds: for every $r\in\Fqs$
  with $ar+d/r\ne0$ there is $s\in\Fqs$ with
  $ar+d/r=s+1/s$. Then either $a=d=0$ or $ad=1$.
\end{lemma}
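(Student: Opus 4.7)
The plan is to reformulate the hypothesis as a ``most values of a quartic are squares'' problem and invoke Lemma~\ref{l:hasse}. In odd characteristic, $s + 1/s = t$ is solvable with $s \in \Fqs$ if and only if the discriminant $t^2 - 4$ of $s^2 - ts + 1$ is a square in $\FF_q$. Substituting $t = ar + d/r$ and clearing $r^2$, the hypothesis becomes: $f(r) := a^2 r^4 + (2ad - 4) r^2 + d^2$ is a square in $\FF_q$ for every $r \in \Fqs$ with $ar + d/r \ne 0$. Both stated conclusions satisfy this trivially: if $a = d = 0$ the hypothesis is vacuous, and if $ad = 1$ then $f(X) = a^2(X^2 - 1/a^2)^2$ is a square in $\FF_q[X]$.

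For the converse with $q$ odd, I would first dispatch the degenerate case $a = 0 \ne d$ (the mirror $d = 0 \ne a$ follows via $r \mapsto 1/r$): after rescaling $u = 2r/d$, the hypothesis reads ``$1 - u^2$ is a square for all $u \in \Fqs$'', which fails for $q \ge 5$ via the character-sum identity $\sum_{u \in \FF_q}\chi(1 - u^2) = -\chi(-1)$, giving at least $(q - 2 + \chi(-1))/2 \ge 1$ non-squares. In the main subcase $ad(ad - 1) \ne 0$, the substitution $Z = X^2$ yields a quadratic of discriminant $-16(ad - 1) \ne 0$ with both roots nonzero, so $f$ is separable of degree $4$. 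At most two $r \in \Fqs$ satisfy $ar + d/r = 0$, and $f$ has at most four roots (disjoint from the previous two, since $f(r) = -4r^2 \ne 0$ there), so $f(r)$ is a nonzero square for at least $q - 7$ values of $r \in \Fqs$, contributing $2(q - 7)$ affine points to $Y^2 = f(X)$; together with the two points over $x = 0$ (where $f(0) = d^2 \ne 0$), the total count satisfies $N \ge 2q - 12$. Combined with the upper bound $N \le q + 1 + 2\sqrt{q}$ from Lemma~\ref{l:hasse}, this gives $\sqrt{q} \le 1 + \sqrt{14} < 5$, i.e.\ $q \le 22$, contradicting $q \ge 23$. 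The principal obstacle will be the sharpness of this bound: the counting must be executed carefully, losing only the explicit six bad values of $r$, since Lemma~\ref{l:hasse} leaves little slack near $q = 23$.

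For even $q$, Lemma~\ref{l:hasse} does not apply, so I would instead use the Artin--Schreier criterion: for $t \ne 0$, $s + 1/s = t$ is solvable in $s \in \Fqs$ iff $\mathrm{Tr}(1/t^2) = 0$, and using $\mathrm{Tr}(y^2) = \mathrm{Tr}(y)$ this collapses to $\mathrm{Tr}(1/t) = 0$. Hence the hypothesis says $\mathrm{Tr}(r/(ar^2 + d)) = 0$ for all relevant $r$. The degenerate case $a = 0 \ne d$ becomes $\mathrm{Tr}(r/d) = 0$ on all of $\Fqs$, absurd since $\mathrm{Tr}$ is surjective. For $a, d \ne 0$, set $c = ad$, $\gamma = \sqrt{c}$, $u = ar$; the characteristic-$2$ identity $u^2 + c = (u + \gamma)^2$ together with the substitution $w = 1/(u + \gamma)$ and $\mathrm{Tr}(\gamma w^2) = \mathrm{Tr}(\sqrt{\gamma}\, w)$ reduce the condition to $\mathrm{Tr}((1 + c^{1/4}) w) = 0$ for all but one or two $w \in \Fqs$. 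Since $\mathrm{Tr}$ is a nontrivial $\FF_2$-linear form with kernel of index $2$, the $\FF_q$-scalar $1 + c^{1/4}$ must vanish, yielding $c = ad = 1$.
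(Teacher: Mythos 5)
Your proposal is correct, and for odd $q$ it is essentially the paper's argument: the hypothesis is converted, via the discriminant of $s^2-ts+1$, into the statement that the same quartic $f(X)=(aX^2+d)^2-4X^2=a^2X^4+(2ad-4)X^2+d^2$ takes square values at almost all $r\in\Fqs$, and Lemma~\ref{l:hasse} bounds the resulting point count. Two small differences there: the paper treats the possibility that $f$ is inseparable as a case of its own and reads off $ad=1$ from the discriminant of the factors $aX^2\pm2X+d$, whereas you exclude $ad\in\set{0,1}$ up front and certify separability through the quadratic in $Z=X^2$ (equivalent content); and your count $2q-12$ is slightly weaker than the paper's $2q-10$, but, as you note, it still just barely rules out $q=23$, so nothing is lost. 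Where you genuinely diverge is the even case and the degenerate case. For even $q$ the paper normalizes to $a=d$ using a square root of $d/a$, observes that multiplication by $a$ permutes $T=\defset{r+1/r}{r\in\Fqs}$, and evaluates the power sum $\sum_{t\in T}t^2=1$ by a geometric-series computation; you instead use the Artin--Schreier criterion $\mathrm{Tr}(1/t)=0$, reduce the condition to $\mathrm{Tr}\bigl((1+c^{1/4})w\bigr)=0$ for all but one value of $w\in\Fqs$, and conclude by counting elements of trace one. This is a clean, arguably more standard alternative; the only point to tighten is your hedge ``one or two'': the excluded set is exactly the single value $w=1/\gamma$, and you should say so, since with two excluded values the counting $q/2\le2$ would leave $q=4$ unresolved, while with one excluded value the trace-form count forces the scalar to vanish for every even $q\ge4$ (and $q=2$ is trivial because $a=d=1$ there). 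Similarly, for the case where exactly one of $a,d$ vanishes, the paper uses a uniform pigeonhole argument (the map $s\mapsto s+1/s$ would have to be injective, forcing $q\le3$), while you use a character-sum estimate in odd characteristic and surjectivity of the trace in characteristic $2$; both are fine, the paper's being parity-independent and computation-free, yours being more quantitative.
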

\begin{proof}
First note that if exactly one of $a$ or $d$ is nonzero, then
$\Fqs\subseteq\defset{s+1/s}{s\in\Fqs}$. So the map
$\sigma: s\mapsto s+1/s$ is injective. But $\sigma(s)=\sigma(1/s)$, so
$s=1/s$ for all $s\in\Fqs$, and therefore $q\leq3$. The
case $q=3$ is indeed an exception, and for $q=2$ the claim is true
anyway.

Thus we assume that $a$ and $d$ are both nonzero.
  
We start with the easier case where $q$ is even. As $s+1/s=0$ for
$s=1$, we have $ar+d/r=s+1/s$ for some $s$ no matter whether $ar+d/r$
is $0$ or not. As the square map is an automorphism of $\FF_q$, there
is $r_0\in F$ with $d/a=r_0^2$.
  
From%
\[
  ar+d/r=ar+ar_0^2/r=ar_0(r/r_0+r_0/r)
\]
and $ad=(ar_0)^2$ we see that in order to prove the lemma, we may
assume $a=d$. 

Set $T=\defset{r+1/r}{r\in\Fqs}$. We see that multiplication by
$a$ permutes the elements of the set $T$. Thus $ax=x$ for
$x=\sum_{t\in T}t^2$. Let $\zeta\in\FF_q$ have order $q-1$. Then the
elements $\zeta^i+\zeta^{-i}$, $i=0,1,\dots,q/2-1$ are the elements
from $T$ without repetitions. Therefore
  \[
    x = \sum_{i=0}^{q/2-1}(\zeta^{2i}+\zeta^{-2i})%
    =
    \frac{\zeta^q-1}{\zeta^2-1}+\frac{\zeta^{-q}-1}{1/\zeta^2-1}%
    =\frac{\zeta-1}{\zeta^2-1}+\frac{1/\zeta-1}{1/\zeta^2-1}=1,
  \]
  hence $a=1$ and we are done.

  Now suppose that $q$ is odd. Note that $ar+d/r=0$ for at most two
  values of $r$. Thus for all but at most $2$ values of $r$, there is
  $s\in\Fqs$ with $ar+d/r=s+1/s$.

  Rewriting $s^2-(ar+d/r)s+1=0$ as $(2rs-ar^2-d)^2=(ar^2+d)^2-4r^2$
  shows that $(ar^2+d)^2-4r^2$ is a square for at least $q-3$ values
  of $r\in\Fqs$. Set $f(X)=(aX^2+d)^2-4X^2$. As $f$ has at most
  $4$ roots, we see that the curve $Y^2=f(X)$ has at least
  $2(q-3-4)+4=2q-10$ affine points over $\FF_q$. If $f$ is separable,
  then by Lemma \ref{l:hasse}, we get $2q-10\le q+1+2\sqrt{q}$, hence
  $q\le19$.

  Finally assume that $f(X)=(aX^2-2X+d)(aX^2+2X+d)$ is
  inseparable. The two factors are relatively prime (because the
  difference is $4X$). Furthermore, the discriminant of both factors
  is $4(1-ad)$, hence $ad=1$ and we are done.
\end{proof}
\begin{remark}
  One can check that $q=13$, $17$, and $19$ do not give
  exceptions. However, $q=3$, $5$, $7$, $9$, and $11$ are indeed
  exceptions. This makes it unlikely that there is a ``cleaner'' proof
  of the lemma for odd $q$.
\end{remark}
\begin{corollary}\label{c:split}
  Let $q\ge4$ be a prime power and $a, b, c, d\in\FF_q$ with
  $ad-bc=1$. Set
  $T=\defset{r+1/r}{r\in\Fqs}\cup\set{0}$. Suppose that
  $ar+d/r\in T$ and $-cr+b/r\in T$ for all $r\in\Fqs$. Then one
  of the following holds:
  \begin{itemize}
  \item[(a)] $a=d=0$ or $b=c=0$;
  \item[(b)] $q=9$ and $ar+d/r=2$ for some $r\in\Fqs$.
  \end{itemize}
\end{corollary}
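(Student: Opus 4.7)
The plan is to apply the previous lemma twice, once to the pair $(a,d)$ and once to the pair $(-c,b)$. The hypothesis $ar+d/r\in T$ for all $r\in\Fqs$ says precisely that whenever $ar+d/r\ne 0$ it equals $s+1/s$ for some $s\in\Fqs$, which is exactly the hypothesis of the previous lemma. Reading $-cr+b/r$ as $(-c)r+b/r$ shows that $(-c,b)$ satisfies the same hypothesis. Hence, provided $q$ is even or $q$ is odd with $q\ge 23$, each of the two pairs is either $(0,0)$ or has product $1$: that is, either $a=d=0$ or $ad=1$, and either $b=c=0$ or $(-c)b=1$, i.e.\ $bc=-1$.

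Suppose neither of the ``trivial'' options occurs, so that $ad=1$ and $bc=-1$. Then $ad-bc=2$, which combined with the standing assumption $ad-bc=1$ forces $2=1$ in $\FF_q$, impossible in any field. Hence at least one of $\{a,d\}$ and $\{b,c\}$ consists of zeros, which is exactly conclusion~(a).

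The main obstacle is the range of odd $q$ with $5\le q\le 19$ that is not covered by the previous lemma. For $q\in\{13,17,19\}$, the remark following that lemma observes that its conclusion still holds, so the argument above goes through unchanged. For $q\in\{5,7,9,11\}$ there are genuine sporadic pairs $(a,d)$ with $ad\ne 1$ satisfying the hypothesis, and one must invoke the coupling $ad-bc=1$ to rule out simultaneous sporadic behaviour of $(a,d)$ and $(-c,b)$. I would dispatch this by a finite enumeration: for each such $q$, list all exceptional pairs satisfying the hypothesis and check which combinations are compatible with the determinant condition. The expected outcome, and the natural place to appeal to the SageMath verification at \cite{dera_psl2_verify}, is that conclusion~(a) still holds for $q\in\{5,7,11\}$, while $q=9$ produces exactly the additional family described in conclusion~(b), characterized by the value $ar+d/r=2$ lying in $T$ for an ``unexpected'' reason.
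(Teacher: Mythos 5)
Your proposal is correct and follows essentially the paper's route: apply the preceding lemma to the pairs $(a,d)$ and $(-c,b)$, observe that $ad=1$ together with $bc=-1$ contradicts $ad-bc=1$, and settle the remaining small odd values of $q$ by a finite (computer) check, which is exactly what the paper delegates to \cite{dera_psl2_verify}. You merely make explicit the determinant contradiction that the paper leaves implicit, and use the remark to shrink the computational range from all odd $q\le19$ to $q\in\{5,7,9,11\}$ --- an inessential difference.
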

\begin{proof}
  Using the previous lemma, one only need to check the cases where
  $q\le19$ is an odd prime power. This is most conveniently done with
  the SageMath code at \cite{dera_psl2_verify}.
\end{proof}
\begin{lemma}
  Let $q$ be a prime power with $q\ge23$ if $q$ is odd. For
  $F=\FF_{q^2}$ set $N=\defset{r\in F}{r^{q+1}=1}$. Suppose that for
  $0\ne a\in F$ the following holds: For all $r\in N$ with
  $ar+a^q/r\ne0$ there is $s\in N$ such that $ar+a^q/r = s+1/s$. Then
  $a\in N$.
\end{lemma}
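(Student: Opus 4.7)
The plan is to translate the hypothesis into a trace condition and then treat odd and even $q$ separately.

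For $r\in N$ one has $r^q=r^{-1}$, so $ar+a^q/r=ar+(ar)^q=\trace_{F/\FF_q}(ar)\in\FF_q$, and likewise $s+1/s=\trace_{F/\FF_q}(s)$ for $s\in N$. Set $c=a^{q+1}\in\FF_q^\times$ and let $I=\trace_{F/\FF_q}(aN)$ and $T=\trace_{F/\FF_q}(N)$. The hypothesis becomes $I\setminus\{0\}\subseteq T$, and the goal is to show $c=1$.

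For odd $q$: because $aN$ consists of the elements of $F$ of norm $c$, a value $t\in\FF_q$ lies in $I$ iff the polynomial $X^2-tX+c$ has a double root in $\FF_q$ or is irreducible over $\FF_q$, equivalently iff $t^2-4c$ is zero or a non-square; $T$ is described the same way with $c$ replaced by $1$. A direct Galois-orbit count on $aN$ gives $\abs{I}\ge(q+1)/2$, whence the hypothesis yields $\abs{I\cap T}\ge(q-1)/2$. Assuming for contradiction that $c\ne1$, the polynomial $(X^2-4)(X^2-4c)$ is separable of degree $4$, so Lemma~\ref{l:hasse} applied to it, combined with inclusion-exclusion on the sets $\{t:t^2-4 \text{ is a nonzero square}\}$ and $\{t:t^2-4c \text{ is a nonzero square}\}$, produces the upper bound $\abs{I\cap T}\le(q+11+2\sqrt{q})/4$. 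Combining the two estimates gives $q\le13+2\sqrt{q}$, which fails for $q\ge23$.

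For even $q$: by Artin--Schreier, $T$ consists of $0$ together with those $t\in\FF_q^\times$ for which $\trace_{\FF_q/\FF_2}(1/t^2)=1$, and $I$ is described analogously with $1$ replaced by $c$. Since squaring is a bijection of $\FF_q$, the hypothesis reduces to the implication $\trace_{\FF_q/\FF_2}(u)=0\Rightarrow\trace_{\FF_q/\FF_2}(cu)=0$ for all $u\in\FF_q$. Hence multiplication by $c$ preserves the codimension-one hyperplane $\ker\trace_{\FF_q/\FF_2}$, and the nondegeneracy of the trace pairing forces $c=1$.

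The main obstacle is the odd case: the size of $I$ and the upper bound on $\abs{I\cap T}$ both have to be pinned down precisely, and the resulting numerics are only just tight enough to give the contradiction for $q\ge23$, indicating that the threshold is essentially sharp.
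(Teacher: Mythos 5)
Your proof is correct, but it takes a genuinely different route from the paper in both the odd and the even case. For odd $q$ the paper parametrizes $N\setminus\set{1}$ by a finite-field Cayley transform $z\mapsto(z-\omega)/(z+\omega)$, shows that $ar+a^q/r=s+1/s$ has at least $2(q-6)$ solutions $(r,s)$, and converts these into affine points of a quartic curve $Y^2=\gamma P(X)Q(X)$ to which Lemma~\ref{l:hasse} applies, giving $q\le 19$; you instead note that $ar+a^q/r=\trace_{F/\FF_q}(ar)$ and that $aN$ is the norm fibre over $c=a^{q+1}$, so that with your notation $t\in I$ iff $t^2-4c$ is zero or a nonsquare and $t\in T$ iff $t^2-4$ is zero or a nonsquare, and the relevant quartic becomes the completely explicit $(X^2-4)(X^2-4c)$, separable precisely because $c\ne1$ is assumed. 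Your numerical claims check out: $\abs{I}\ge(q+1)/2$ since the trace fibres on $aN$ have size at most $2$; the conic counts give exactly $(q-3)/2$ values of $t$ with $t^2-4$ (respectively $t^2-4c$) a nonzero square; the quartic together with Lemma~\ref{l:hasse} (equivalently, the quadratic character sum $\sum_t\chi\bigl((t^2-4)(t^2-4c)\bigr)\le 1+2\sqrt q$) bounds the number of $t$ with both expressions nonzero squares by $(q-1+2\sqrt q)/4$, and inclusion–exclusion then yields $\abs{I\cap T}\le 3+(q-1+2\sqrt q)/4=(q+11+2\sqrt q)/4$, which contradicts $\abs{I\cap T}\ge(q-1)/2$ exactly for $q\ge23$. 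For even $q$ the paper reduces to $a^q=a$ and uses that multiplication by $a$ permutes $T$ together with the identity $\sum_{t\in T}t^2=1$, whereas your Artin–Schreier description of $I$ and $T$ plus nondegeneracy of the $\FF_2$-trace form (so that $c\cdot\ker\trace_{\FF_q/\FF_2}\subseteq\ker\trace_{\FF_q/\FF_2}$ forces $c\in\FF_2$, hence $c=1$) is an equally short independent argument. What your route buys is a simpler, coordinate-free curve and a derivation in which the threshold $q\ge23$ drops out directly from $q\le 13+2\sqrt q$; what the paper's route buys is counting that avoids any bookkeeping with quadratic residues and runs parallel to its treatment of the split-torus lemma.
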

\begin{proof} We first handle the easier case that $q$ is even.
As the square map is an automorphism of $F$, there is $r_0\in F$ with
$a^{q-1}=r_0^2$. Note that $a^{q-1}\in N$, hence $r_0\in N$.
  
From%
\[
  ar+a^q/r=ar+ar_0^2/r=ar_0(r/r_0+r_0/r)
\]
  we see that in order to prove the lemma, we may assume $a^q=a$.

  Set $T=\defset{r+1/r}{r\in N}$. We see that multiplication by $a$
  permutes the elements of the set $T$. Thus $ax=x$ for
  $x=\sum_{t\in T}t^2$. Let $\zeta\in F$ have order $q+1$. Then the
  elements $\zeta^i+\zeta^{-i}$, $i=0,1,\dots,q/2$ are the elements
  from $T$ without repetitions. Therefore
  \[
    x = \sum_{i=0}^{q/2}(\zeta^{2i}+\zeta^{-2i})%
    =
    \frac{\zeta^{2+q}-1}{\zeta^2-1}+\frac{\zeta^{-2-q}-1}{1/\zeta^2-1}%
    =\frac{\zeta-1}{\zeta^2-1}+\frac{1/\zeta-1}{1/\zeta^2-1}=1,
  \]
  hence $a=1$ and we are done.

  Now assume that $q$ is odd. An argument as above does not work
  because if $a^{(q-1)/2}$ is not in $N$, then
  $\defset{ar+a^q/r}{r\in N}$ is a proper subset of
  $\defset{s+1/s}{s\in N}$. (Note that $ar+a^q/r$ is fixed under
  $r\mapsto a^{q-1}/r$, so this map would not have a fixed point, in
  contrast to $s\mapsto 1/s$ which fixes $s+1/s$ for $s=\pm1$.)

  We use a finite field analog of the Cayley transform from complex
  numbers to parametrize the ``circle'' $N=\defset{r}{r\cdot r^q=1}$.
  Pick $0\ne\omega\in F$ with $\omega^q=-\omega$. Note that
  $\gamma=\omega^2$ is in $\FF_q$. Set $N'=N\setminus\set{1}$. One
  quickly checks that
  \[
    \FF_q\to N', z\mapsto\frac{z-\omega}{z+\omega}
  \]
  is bijective.

  Set $f(r)=ar+a^q/r$. We want to compute a lower bound for the size
  of $M=\defset{(r,s)\in N'\times N'}{f(r)=s+1/s}$. Note that given
  $c$, there are at most two values for $r$ with $f(r)=c$. Therefore,
  there are at least $\abs{N'}-6=q-6$ choices $r\in N'$ such that
  $f(r)\notin\set{-2, 0, 2}$. For these $r$, there is $s\in N'$ with
  $(r, s)\in M$. But then $(r,1/s)\ne(r,s)$ is also in $M$. Thus
  $\abs{M}\ge2(q-6)$.

  With $r=\frac{x-\omega}{x+\omega}$ and $s=\frac{y-\omega}{y+\omega}$
  we get that
  \[
    a\frac{X-\omega}{X+\omega} + a^q\frac{X+\omega}{X-\omega} =
    \frac{Y-\omega}{Y+\omega} + \frac{Y+\omega}{Y-\omega}
  \]
  has at least $2(q-6)$ solutions $(x,y)\in\FF_q\times\FF_q$. Write
  $a=u+v\omega$ with $u,v\in\FF_q$. Then $a^q=u-v\omega$. For later
  use we note that
  $a^{q+1}=a\cdot a^q=(u+v\omega)(u-v\omega)=u^2-v^2\gamma$.

  Clearing denominators and using $\omega^2=\gamma$, we arrive at the
  curve $P(X)Y^2 = \gamma Q(X)$, where
  \begin{align*}
    P(X) &= \left(u - 1\right) X^{2} - 2 v \gamma X + (u+1)\gamma\\
    Q(X) &= \left(u + 1\right) X^{2} - 2 v \gamma X + (u-1)\gamma.
  \end{align*}
  Note that $P, Q\in\FF_q[X]$.

  The discriminant of $P(X)$ and $Q(X)$ is
  $-4\gamma(u^2-v^2\gamma - 1)=-4\gamma(a^{q+1}-1)$ and
  $4\gamma(u^2-v^2\gamma - 1)=-4\gamma(a^{q+1}-1)$,
  respectively. Furthermore, the resultant of $P(X)$ and $Q(X)$ is
  $16\gamma^2(u^2-v^2\gamma)=16\gamma^2a^{q+1}$.

  We assume that $a^{q+1}\ne1$, for otherwise $a\in N$ and we are
  done. Then the product $P(X)Q(X)$ is a separable polynomial of
  degree $\le4$.

  Note that if $(x,y)$ is a point on $P(X)Y^2 = \gamma Q(X)$, then
  $P(x)\ne0$. Thus the points $(x,y)$ on the curve
  $P(X)Y^2 = \gamma Q(X)$ give different points $(x, yP(x))$ on the
  curve $Y^2=\gamma P(X)Q(X)$.

  We see that the curve $Y^2=\gamma P(X)Q(X)$ has at least $2(q-6)$
  affine points over $\FF_q$. On the other hand, by Lemma
  \ref{l:hasse}, this number is at most $q+1+2\sqrt{q}$. From this one
  quickly obtains $q\le19$.
\end{proof}
\begin{remark}
  One can check that $q=11$, $17$, and $19$ do not give exceptions.
  However, the cases $q=3$, $5$, $7$, $9$, $13$ are indeed
  exceptions. This is most easily seen for $q=3$, because then
  $\defset{s+1/s}{s\in N}=\FF_q$, so the condition on $a$ is vacuous.
\end{remark}
\begin{corollary}\label{c:nonsplit}
  Let $q$ be a prime power. Set $N=\defset{r\in\FF_{q^2}}{r^{q+1}=1}$
  and $T=\defset{r+1/r}{r\in N}$. For $a, b\in\FF_{q^2}$ with
  $a^{q+1} + b^{q+1} = 1$ suppose that $ar+a^q/r\in T$ and
  $br+b^q/r\in T$ for all $r\in\Fqs$. Then $a=0$ or $b=0$.
\end{corollary}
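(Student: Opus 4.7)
The plan is to derive the corollary from the preceding lemma by contradiction. Suppose toward a contradiction that both $a$ and $b$ are nonzero. For each $r\in N$ with $ar+a^q/r\ne 0$, the assumption $ar+a^q/r\in T$ supplies some $s\in N$ with $ar+a^q/r=s+1/s$, which is exactly the hypothesis of the preceding lemma applied to $a$; the same reasoning applies to $b$.

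Assuming $q$ lies in the range where the preceding lemma concludes --- any even $q$, or any odd $q\ge 23$ --- the lemma then forces $a^{q+1}=1$ and $b^{q+1}=1$, and therefore $a^{q+1}+b^{q+1}=2$. Combined with the hypothesis that this sum equals $1$, this gives $1=0$ in $\FF_q$, which is absurd. Hence in this range we must have $a=0$ or $b=0$.

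The remaining odd prime powers $q\le 19$, namely $q\in\{3,5,7,9,11,13,17,19\}$, fall outside the scope of the preceding lemma and need to be handled separately. Since these are finitely many finite problems, I would dispatch each by an exhaustive search over pairs $(a,b)\in\FF_{q^2}\times\FF_{q^2}$ satisfying $a^{q+1}+b^{q+1}=1$, checking whether the $T$-conditions can simultaneously be met with both $a$ and $b$ nonzero. Following the pattern of the proof of Corollary~\ref{c:split}, this is most conveniently delegated to the SageMath code at \cite{dera_psl2_verify}.

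The reduction to the preceding lemma is essentially immediate, so the only real work is the small-case verification; the main things to watch out for there are to correctly implement the quantification of $r$ over the norm-$1$ set $N$ (rather than over $\Fqs$), and to exhaust the finite surface $a^{q+1}+b^{q+1}=1$ in each case. A priori one might also worry that some small $q$ does admit a pair with both entries nonzero --- parallel to the $q=9$ sporadic case in Corollary~\ref{c:split} --- in which case the statement would need to be amended; but the computation is expected to rule this out.
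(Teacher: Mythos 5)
Your proposal is correct and follows essentially the same route as the paper: apply the preceding lemma when $q$ is even or $q\ge23$, and delegate the remaining small odd prime powers $q\le19$ to the computational check in \cite{dera_psl2_verify}. The only cosmetic difference is that you apply the lemma to both $a$ and $b$ and derive $2=1$, whereas the paper applies it to one of them (say $a\ne0$ gives $a^{q+1}=1$, forcing $b^{q+1}=0$, so $b=0$); both versions are fine, and your remark about quantifying $r$ over $N$ correctly identifies what the hypothesis must mean for the lemma to apply.
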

\begin{proof}
  If $q\ge23$, then the previous lemma shows that $a^{q+1}=1$, hence
  $b=0$, or $b^{q+1}=1$, hence $a=0$. The cases $q\le19$ are
  checked in \cite{dera_psl2_verify}.
\end{proof}
\section{The groups of quaternions $2A_4$, $2S_4$, and $2A_5$}
\begin{lemma}
  Let $F$ be a field and $\bH$ be a subgroup of $\PSL_2(F)$ isomorphic
  to $A_4$, $S_4$, or $A_5$. Let $H$ be the preimage of $\bH$ in
  $\SL_2(F)$ and $x, y\in F^{2\times 2}$. If the characteristic of $F$
  is $0$ or relatively prime to $\abs{\bH}$, then
  \[
    \sum_{h\in H}\trace(hy)\cdot\trace(hx)^{2m+1}%
    =\abs{\bH}\cdot c_m\cdot%
    (\trace(x)\cdot\trace(y)-\trace(x\cdot y))\cdot\det(x)^m,
  \]
  where $m\in\set{0, 1}$ if $\bH\cong A_4$, $m\in\set{0, 1, 2}$ if
  $\bH\cong S_4$, $m\in\set{0, 1, 2, 3, 4}$ if $\bH\cong A_5$, and
  $c_0, c_1, c_2, c_3, c_4$ = $1, 2, 5, 14, 42$ are the first five
  Catalan numbers (with index shifted by $1$).
\end{lemma}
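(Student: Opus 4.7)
My approach is to reinterpret the sum as a trace on a tensor power of the natural $\SL_2$-module and then appeal to the invariant theory of the binary polyhedral groups. Set $n = 2m+2$ and $V = F^2$. Using $\trace(A)^k = \trace_{V^{\otimes k}}(A^{\otimes k})$ together with the factorization $\trace(hy)\trace(hx)^{2m+1} = \trace_{V^{\otimes n}}(h^{\otimes n}(y\otimes x^{\otimes(2m+1)}))$, the left-hand side rewrites as
\[
  \trace_{V^{\otimes n}}\!\Bigl(\bigl(\textstyle\sum_{h\in H}h^{\otimes n}\bigr)(y\otimes x^{\otimes(2m+1)})\Bigr).
\]
Because the characteristic is coprime to $\abs{H}$, the operator $\tfrac{1}{\abs{H}}\sum_h h^{\otimes n}$ is the $H$-equivariant projection $P_H$ onto the fixed subspace $(V^{\otimes n})^H$.

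The crux is to show that, in the stated range of $m$, one has $(V^{\otimes n})^H = (V^{\otimes n})^{\SL_2(F)}$. Decomposing $V^{\otimes n}$ into symmetric powers as an $\SL_2$-module reduces this to the vanishing $(\mathrm{Sym}^k V)^H = 0$ for every even $k$ with $0 < k \le n$, i.e.\ to the absence of $H$-invariant binary forms of degree at most $n$. Klein's classical description of the invariants of the three binary polyhedral groups gives the lowest nontrivial degrees as $6$, $8$, and $12$ for $2A_4$, $2S_4$, and $2A_5$ respectively, which matches the ranges $n\le 4, 6, 10$ of the lemma exactly. In characteristic coprime to $\abs{H}$ the required dimension count is governed by characters and reduces to the characteristic-zero statement. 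The dimension of $(V^{\otimes n})^{\SL_2(F)}$ is the Catalan number $C_{m+1}$, which accounts for the factor $c_m$ on the right-hand side.

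Granted that $P_H$ equals the $\SL_2$-equivariant projector, the expression $\trace_{V^{\otimes n}}(P_H(y\otimes x^{\otimes(2m+1)}))$ is an $\SL_2$-invariant polynomial of bidegree $(2m+1, 1)$ in $(x, y)$. By the first fundamental theorem of $\SL_2$-invariant theory together with Cayley--Hamilton (which reduces $x^2$ to $\trace(x) x - \det(x) I$), every such polynomial is a linear combination of the monomials $\trace(y)\trace(x)^a\det(x)^b$ and $\trace(xy)\trace(x)^a\det(x)^b$; the precise combination can be pinned down by working with the non-crossing pair partition basis of $(V^{\otimes n})^{\SL_2}$, or equivalently by specializing $x$ to a diagonal matrix with generic eigenvalues, expanding $\trace(hx)^{2m+1}$ binomially, and evaluating the resulting matrix-coefficient sums via character theory (the $m=0$ case, combined with the fact that $V$ appears with multiplicity $c_m$ in $V^{\otimes(2m+1)}$, fixes the overall scalar). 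Either route yields $(\trace(x)\trace(y) - \trace(xy))\det(x)^m$ as the $(x,y)$-dependence with coefficient $\tfrac{c_m}{2}$; multiplying by $\abs{H} = 2\abs{\bar H}$ gives the claimed identity. The main obstacle is the representation-theoretic vanishing $(\mathrm{Sym}^k V)^H = 0$ for $k \le n$; once this is secured---by a short character computation per group, or by citation of Klein's invariant-degree tables---the rest is classical $\SL_2$-invariant bookkeeping.
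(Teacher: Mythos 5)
Your characteristic-zero argument is sound and genuinely different from the paper's: the paper reduces, via the uniqueness of the conjugacy class of $\bH$ in $\PSL_2(\bar F)$, to the explicit quaternionic models $2A_4$, $2S_4$, $2A_5$, verifies the identity there by a machine computation over a number field, and then reduces mod $p$ (the only denominators being $2$, resp.\ $\rho$). Your route replaces the computation by the averaging projector $P_H=\tfrac1{\abs H}\sum_h h^{\otimes n}$ on $V^{\otimes n}$, $n=2m+2$, the vanishing of $\bH$-invariant binary forms of degree $\le n$ (Klein's degrees $6,8,12$ match the ranges of $m$ exactly), and classical $\SL_2$-invariant theory; it is more conceptual, does not need the uniqueness of the conjugacy class in characteristic $0$, and explains where the Catalan numbers come from. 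The coefficient determination is only sketched, but it can be completed cleanly (for instance: for $x\in\SL_2$ write $y\otimes x^{\otimes(2m+1)}=(yx^{-1}\otimes E^{\otimes(2m+1)})\,x^{\otimes n}$, use $x^{\otimes n}P=P$ and cyclicity to get $\kappa_m\trace(yx^{-1})$ with $\kappa_m=\tfrac{c_m}{2}$ from $\trace(P)=c_m$, then apply Cayley--Hamilton and homogeneity in $x$), so I would not count that as a gap.

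The genuine gap is the passage to positive characteristic. Your one-sentence reduction ("governed by characters\dots reduces to the characteristic-zero statement") legitimately handles only the $H$-side: coprimality of the characteristic to $\abs H$ gives semisimplicity of $F[H]$ and the correct dimension of $(V^{\otimes n})^H$. But the $\SL_2$-side of your argument --- the decomposition of $V^{\otimes n}$ into symmetric powers, the irreducibility of $\mathrm{Sym}^kV$, the identification of $P_H$ with an $\SL_2$-equivariant projector (which uses an $\SL_2$-stable complement of the invariants), the Catalan dimension of $(V^{\otimes n})^{\SL_2}$, and the first fundamental theorem for matrix invariants --- is characteristic-zero apparatus. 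It breaks down exactly in the cases the lemma still allows: characteristic $p\le 2m+2$ with $p\nmid\abs{\bH}$, e.g.\ $p=5$ with $\bH\cong S_4$, $m=2$, or $p=7$ with $\bH\cong A_5$, $m=3,4$; note that $q=49$ with $\bH\cong A_5$ actually occurs in the application to Theorem~\ref{t:main}, so this is not a vacuous corner. In characteristic $7$, $V^{\otimes 10}$ is not a semisimple $\SL_2$-module and $\mathrm{Sym}^8V$, $\mathrm{Sym}^{10}V$ are reducible, so "decomposing into symmetric powers" and the equality $P_H=P_{\SL_2}$ are not available as stated. The standard repair is to lift: since $p\nmid\abs H$, the embedding $H\hookrightarrow\SL_2(F)$ lifts to $\SL_2(R)$ for a complete discrete valuation ring $R$ of characteristic $0$ (Brauer lifting, or, as in the paper, conjugate $H$ to the reduction of the explicit quaternionic model), prove the identity in characteristic $0$ by your argument, and then reduce the resulting polynomial identity in the entries of $x,y$ modulo the maximal ideal. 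Without some such step your proof does not cover all fields allowed by the hypothesis, whereas with it your argument becomes a complete, computation-free alternative to the paper's verification.
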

\begin{proof}
  We assume that the characteristic of $F$ is $0$ or relatively prime
  to $\abs{\bH}$. Let $\bar F$ be an algebraic closure of $F$. It is
  well known that $\PSL_2(\bar F)$ contains exactly one conjugacy
  class of subgroups isomorphic to $\bH$, see e.g.~\cite[Proposition
  4.1]{beauville___pgl2}. (In fact more is true. By \cite[Proposition
  4.2]{beauville___pgl2} $\PGL_2(F)$ contains at most one conjugacy
  class of subgroups isomorphic to $\bH$.)

  If we replace $H$ by $H^g$ for some $g\in\SL_2(\bar F)$, then
  in view of $\trace(g^{-1}hgx)=\trace(hgxg^{-1})$ and
  $\trace(g^{-1}hgy)=\trace(hgyg^{-1})$, this amounts to replacing $x$
  and $y$ with $gxg^{-1}$ and $gyg^{-1}$, which however does not
  change the right hand side.

  Thus the claimed identity is independent of the chosen isomorphic
  copy of $H$ in $\SL_2(\bar F)$.

  In $\bar F$ we pick elements $i, \rho$, and $\sigma$ with $i^2=-1$,
  $\rho^2=2$, and $\sigma^2+\sigma=1$. Let $E\in\bar F^{2\times2}$ be
  the identity matrix, and set
  \[ I = \begin{pmatrix}i & 0\\0 & -i\end{pmatrix},\;\;%
    J = \begin{pmatrix}0 & 1\\-1 & 0\end{pmatrix},\;\;%
    K = IJ.
\]
Then $I^2=J^2=K^2=-E$ and $JI=-IJ$. Note that
$\set{\pm E, \pm I, \pm J, \pm K}$ is the quaternion group
$Q_8$. Following \cite{conway_smith}, we set
\[
  W = \frac{-E+I+J+K}{2}
\]
and
\begin{align*}
  2A_4 &= \gen{I, W}\\
  2S_4 &= \biggen{\frac{J + K}{\rho}, W}\\
         2A_5 &= \biggen{\frac{I + \sigma J + (\sigma+1)K)}{2}, W}.
\end{align*}
Then the images of $2A_4$, $2S_4$, and $2A_5$ in $\PSL_2(\bar F)$ are
isomorphic to $A_4$, $S_4$, and $A_5$, respectively.

With this explicit description of the groups
$H\in\set{2A_4, 2S_4, 2A_5}$, we verify the computation with the
SageMath script at \cite{dera_psl2_verify}. Note that the computations
are in characteristic $0$. But the only denominators which appear in
the calculations are $2$ (or $\rho$ where $\rho^2=2$), so the
identities which \cite{dera_psl2_verify} verifies hold true modulo the
characteristic of $F$. More precisely, \cite{dera_psl2_verify} does
the following: Let $F$ be the number field $\QQ(i)$, $\QQ(i, \rho)$,
or $\QQ(i, \sigma)$ according to $\bH=A_4$, $\bH=S_4$, or
$\bH=A_5$. Even though the given matrices generate finite groups,
SageMath (and its backend Gap) cannot compute the generated groups
over such number fields. The helper function \texttt{gener(L)}
implements a naive straightforward algorithm to compute the group
generated by the list \texttt{L} of matrices from $F^{2\times 2}$. The
remaining code which verifies the stated identities should be self
explanatory.
\end{proof}
\begin{remark}
  \begin{itemize}
    \item[(a)] The stated identities do not generalize to bigger
      values of $m$.
    \item[(b)] With a little more effort one can show that the lemma
      still holds under the weaker assumption that the characteristic
      of $F$ is not $2$. For the cases $A_4$ and $S_4$ essentially the
      same argument goes through, because $\PSL_2(\bar F)$ has only
      one conjugacy class of subgroups isomorphic to
      $V=C_2\times C_2$, the normalizer of $V$ in $\PSL_2(\bar F)$ is
      $S_4$, and $A_4$ is the derived subgroup of $S_4$. The case
      $A_5$ requires an additional argument if $F$ has characteristic
      $5$, since, besides the conjugacy class of groups considered
      above, there is another conjugacy class represented by
      $\PSL_2(5)$.
    \item[(c)] There is probably a more conceptual explanation of the
      identity. Let the entries of $x$ and $y$ be algebraically
      independent over $F$ and $f(x, y)$ be the left hand side, which
      is a polynomial of degree $2m+2$ in $8$ variables.
      
      Note that for $h\in H$ (recall that $H\le\SL_2(\bar F)$), we
      have $f(h'x, h'y)=f(x, y)$, so $f$ is an invariant of
      $H$. Working out the low degree invariants of $H$ could
      provide a more conceptual explanation. (At a first glance, it
      might not be obvious that the right hand side is an invariant
      under simultaneous left multiplication of $x$ and $y$ by
      $h$. But this follows from the identity
      $\trace(x)\cdot\trace(y)-\trace(x\cdot y)=\trace(\adj(y)\cdot
      x)$, where $\adj(y)$ denotes the adjugate of $y$.)
    \end{itemize}
\end{remark}
\begin{lemma}\label{l:polyhedral}
  Let $F$ be a field and $\bH$ be a subgroup of $\PSL_2(F)$ isomorphic
  to $A_4$, $S_4$, or $A_5$. Assume that the characteristic of $F$ is
  $0$ or relatively prime to $\abs{\bH}$. Let $H$ be the preimage of
  $\bH$ in $\SL_2(F)$. Set
  $T=\defset{\trace(h)}{h\in H}\setminus\set{-2, 2}$, and for
  $x = \begin{pmatrix}a & b\\c & d\end{pmatrix}\in\SL_2(F)$ set
  $y = \begin{pmatrix}0 & 0\\c & d\end{pmatrix}$. Then
  \[
    \sum_{h\in H}\left(\trace(hy)\prod_{t\in T}(\trace(hx)-t)\right)%
    =\abs{\bH}.
  \]
\end{lemma}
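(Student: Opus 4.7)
The plan is to expand $\prod_{t\in T}(\trace(hx)-t)$ as a polynomial in $\trace(hx)$ and reduce termwise to the preceding lemma. Two features of $T$ will do the heavy lifting: since $-E\in H$ and $\trace(-h)=-\trace(h)$, the set $T$ is closed under negation; and the involutions of $\bH$ lift to elements of order $4$ in $H$, whose eigenvalues are $\pm i$, so $0\in T$. Consequently $\prod_{t\in T}(X-t)$ is an odd polynomial: writing $T_+$ for the positive half of $T\setminus\{0\}$,
\[
\prod_{t\in T}(X-t)\;=\;X\prod_{s\in T_+}(X^2-s^2)\;=\;\sum_{k=0}^{|T_+|}\gamma_k\,X^{2k+1}
\]
for certain $\gamma_k\in F$. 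The degree is $3$, $5$, or $7$ in the cases $A_4$, $S_4$, $A_5$, which lies in the range of odd exponents handled by the preceding lemma.

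Next I would apply the preceding lemma termwise. For $x=\begin{pmatrix}a&b\\c&d\end{pmatrix}\in\SL_2(F)$ and $y=\begin{pmatrix}0&0\\c&d\end{pmatrix}$ one has $\det(x)=1$, $\trace(x)\trace(y)=(a+d)d$, and $\trace(xy)=bc+d^2$, hence $\trace(x)\trace(y)-\trace(xy)=ad-bc=1$. Therefore the preceding lemma specialises to
\[
\sum_{h\in H}\trace(hy)\trace(hx)^{2k+1}=|\bH|\,c_k
\]
for every admissible $k$. Summing with the weights $\gamma_k$, the left-hand side of the statement collapses to $|\bH|\sum_k c_k\gamma_k$, and the lemma reduces to the numerical identity $\sum_k c_k\gamma_k=1$.

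The remainder is a case-by-case verification, and the only delicate point is identifying $T$ correctly from the element orders of $H$. For $H=2A_4$ the orders are $\{1,2,3,4,6\}$, so $T=\{0,\pm 1\}$ and the polynomial equals $X^3-X$, giving $-c_0+c_1=-1+2=1$. For $H=2S_4$ the additional order $8$ contributes traces $\pm\sqrt 2$, so the polynomial is $X(X^2-1)(X^2-2)=X^5-3X^3+2X$, and $2c_0-3c_1+c_2=2-6+5=1$. For $H=2A_5$, with $\phi=(1+\sqrt 5)/2$, the orders $5$ and $10$ contribute traces $\pm\phi^{-1}$ and $\pm\phi$; using $\phi^2+\phi^{-2}=3$, the polynomial works out to $X^7-4X^5+4X^3-X$, so $-c_0+4c_1-4c_2+c_3=-1+8-20+14=1$. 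The main obstacle is conceptual rather than technical: recognising the $T=-T$ symmetry, which is what makes the product over $T$ an odd polynomial and thus accessible to the preceding lemma at all. Distinctness of the listed trace values, needed so that $|T|$ is as claimed, follows from the coprime-characteristic hypothesis since the relevant differences are values of cyclotomic-type expressions that are nonzero in any such characteristic.
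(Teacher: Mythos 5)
Your proposal is correct and follows essentially the same route as the paper: use $\trace(x)\trace(y)-\trace(xy)=ad-bc=1$ to specialise the preceding lemma, expand the odd polynomial $\prod_{t\in T}(Z-t)$ ($Z^3-Z$, $Z^5-3Z^3+2Z$, $Z^7-4Z^5+4Z^3-Z$) and check the Catalan-number coefficient sums equal $1$. The only difference is cosmetic: the paper verifies these trace polynomials by the SageMath script, whereas you derive them by hand from the element orders of the binary polyhedral groups and the symmetry $T=-T$, which is a perfectly adequate substitute.
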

\begin{proof}
  With this choice of $x$ and $y$ we have
  \[\trace(x)\cdot\trace(y)-\trace(x\cdot y)=ad-bc=\det(x)=1,\]
  so the previous lemma yields
     \[
       \sum_{h\in H}\trace(hy)\cdot\trace(hx)^{2m+1}%
       =\abs{\bH}\cdot c_m.
     \]
     Let $Z$ be a variable over $F$. Then \cite{dera_psl2_verify} verifies
     \[
       \prod_{t\in T}(Z-t) =
       \begin{cases}
         Z^3 - Z,&\text{ if }\bH\cong A_4\\
         Z^5 - 3Z^3 + 2Z,&\text{ if }\bH\cong S_4\\
         Z^7 - 4Z^5 + 4Z^3 - Z,&\text{ if }\bH\cong A_5.\\
       \end{cases}
     \]
This yields
     \[
       \sum_{h\in H}\left(\trace(hy)\prod_{t\in
           T}(\trace(hx)-t)\right)%
       =
       \begin{cases}
       \abs{\bH}(c_1-c_0)=\abs{\bH},&\text{ if }\bH\cong A_4\\
         \abs{\bH}(c_2-3c_1+2c_0)=\abs{\bH},&\text{ if }\bH\cong S_4\\
         \abs{\bH}(c_3-4c_2+4c_1-c_0)=\abs{\bH},&\text{ if }\bH\cong A_5.\\
       \end{cases}
     \]
\end{proof}
\section{Proof of Theorem \ref{t:main}}
The maximal subgroups of almost simple groups with socle $\PSL_2(q)$
are known; see \cite[Table~8.1, Table~8.2]{bray_etal___maximal} or
\cite{giudici___psl2}. The results in \cite{bray_etal___maximal} and
\cite{giudici___psl2} are more detailed than those in the following
lemma, which lists only the restrictions on $q$ that we shall use.
\begin{lemma}\label{l:giudici}
  Let $q\ge4$ be a prime power and $S=\PSL_2(q)\le
  G\le\PGammaL_2(q)$. Let $M$ be a maximal subgroup of $G$ with
  $S\not\le M$. Set $\bH=M\cap S$. Then one of the following holds.
  \begin{itemize}
  \item[(a)] $[S:\bH] = q+1$,
  \item[(b)] $[S:\bH] = q(q+1)/2$,
  \item[(c)] $[S:\bH] = q(q-1)/2$,
  \item[(d)] $\bH\cong A_4$, and $q$ is and odd prime,
  \item[(e)] $\bH\cong S_4$, and $q$ is a prime with $q\equiv\pm1\mod{8}$,
  \item[(f)] $\bH\cong A_5$, and $q\equiv\pm1\mod{10}$ and $q=9$ if $q$ is
    power of $3$,
  \item[(g)] $\bH\cong\PSL_2(r)$ or $\bH\cong\PGL_2(r)$, where $q=r^e$
    for $e\ge2$.
  \end{itemize}
  Furthermore, in each case $\bH$ is unique up to conjugacy in
  $\aut(S)$ (with two classes in case (g) if $r$ is odd).
\end{lemma}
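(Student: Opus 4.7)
The plan is straightforward: the lemma is a distillation of Dickson's classical classification of the subgroups of $\PSL_2(q)$, and my proof will consist of reading off the relevant information from the tables of \cite{bray_etal___maximal} (or the simplified presentation of \cite{giudici___psl2}). Since these sources list the maximal subgroups of every almost simple group with socle $\PSL_2(q)$, together with the index in $S$ and the condition on $q$ under which the subgroup is maximal in $G$, the work reduces to verifying that the seven cases (a)--(g) exhaust the possibilities once one groups them by the data used later in the paper.

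First I would identify the subgroup types. The three infinite families are: the Borel subgroup (stabilizer of a point of the projective line), giving index $q+1$; the split-torus normalizer, a dihedral group of order $q-1$, giving index $\abs{S}/(q-1)=q(q+1)/2$; and the non-split-torus normalizer, dihedral of order $q+1$, giving index $q(q-1)/2$. These are cases (a), (b), (c). The exceptional polyhedral subgroups $A_4$, $S_4$, $A_5$ give cases (d), (e), (f), and the subfield subgroups $\PSL_2(r)$ (or $\PGL_2(r)$) for $q=r^e$, $e\ge2$, give case (g).

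Next I would extract the restrictions on $q$. Cases (a), (b), (c) are always maximal in $G$ for $q\ge4$, up to the well-known small-$q$ degeneracies ruled out by our hypothesis. For (d), the tables show $A_4$ is maximal in $G$ only when $q$ is an odd prime (otherwise it is contained in $S_4$, in an $A_5$, or in a subfield subgroup); for (e), one needs $q$ prime with $q\equiv\pm1\pmod 8$ so that $S_4\le\PSL_2(q)$ is not contained in a larger maximal; for (f), $A_5$ requires $q\equiv\pm1\pmod{10}$, together with $q=9$ when $q$ is a power of $3$.

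Finally, the uniqueness claim up to $\aut(S)$-conjugacy is read from the same references: for (a)--(c) there is already a single $S$-class, and for (d)--(f) the conjugacy in $\PGL_2(\bar F)$ from \cite[Proposition 4.2]{beauville___pgl2} (cited in the previous section) implies a single $\aut(S)$-class. For (g) with $r$ odd, the tables record that $\PSL_2(r)$ breaks into two $\aut(S)$-conjugacy classes in $\PSL_2(q)$, while for $r$ even there is a single class, which accounts for the parenthetical. The main obstacle here is not mathematical but bookkeeping: the sources use slightly different labelings of the subgroup types, and one has to match carefully the condition ``maximal in $G$'' (rather than merely ``maximal in $S$'') against the stated conditions on $q$ as $G$ ranges between $S$ and $\PGammaL_2(q)$.
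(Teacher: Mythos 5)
Your proposal matches the paper's treatment: the paper gives no independent proof of this lemma, but presents it as a summary of the known classification, citing the tables of \cite{bray_etal___maximal} and \cite{giudici___psl2}, which is exactly the table-lookup you describe. The minor imprecision about the dihedral orders for $q$ even (they are $2(q\pm1)$ rather than $q\pm1$) does not affect the stated indices, so your reading is consistent with the lemma as used in the paper.
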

Let $S$ and $G$ be as in Theorem \ref{t:main}, $M$ be a stabilizer of
a point in $\Omega$, and $\bH=M\cap S$. Then $M$ is a maximal subgroup
of $G$ by primitivity. We handle the cases (a) to (g) separately.

Case (a) is very easy: Here the action of $S$ on $\Omega$ is the
natural action, which is doubly transitive. Pick distinct
$\alpha, \beta\in\Omega$ and an arbitrary derangement $y$ in $S$. By
double transitivity of $S$, there is $s\in S$ with $\alpha^s=\alpha$
and $\alpha^{ys}=\beta$. (Note that $\alpha^y\ne\alpha$.) Then $g=y^s$
is a derangement with $\alpha^g=\alpha^{s^{-1}ys}=\beta$.

The remaining cases (b) to (g) require some work. Suppose that there
is a counterexample. Then there exist distinct
$\alpha, \beta\in\Omega$ such that no derangement in $S$ moves
$\alpha$ to $\beta$. Let $\bH$ be the stabilizer in $S$ of $\alpha$
and $\bx\in S$ with $\alpha^{\bx}=\beta$. Then $\bH\bx$ is the set of
elements which moves $\alpha$ to $\beta$. As this set does not contain
a derangement, every element in the coset $\bH\bx$ is conjugate to an
element from $\bH$.

This property is preserved upon applying an automorphism of $S$ to
$\bH$ and $\bx$. Therefore, in each of the cases listed in the
lemma, we may make a specific choice of $\bH$.

Let $H$ be the preimage of $\bH$ in $\SL(2, q)$, and
$x\in\SL(2, q)$ be a preimage of $\bx$. Thus $hx$ is conjugate to an
element from $H$ for every $h\in H$. This implies
\begin{equation}
  \label{eq:trace1}
  \defset{\trace(hx)}{h\in H}\subseteq%
  \defset{\trace(h)}{h\in H}.  
\end{equation}
We will also need a refinement of this set inclusion. Suppose that
$\trace{hx}=\pm2$ for some $h\in H$. Then the characteristic
polynomial of $hx$ is $X^2\pm2X+1=(X\pm1)^2$, so the order of
$hx$ is divides the characteristic of $\FF_q$. Thus if the
characteristic of $F$ does not divide $\abs{H}$, then
$hx\in H$, hence $x\in H$, a contradiction. So we get
\begin{equation}
  \label{eq:trace2}
  \defset{\trace(hx)}{h\in H}\subseteq%
  \defset{\trace(h)}{h\in H}\setminus\set{-2, 2}\text{ if }%
  \gcd(q, \abs{H})=1.
\end{equation}
\subsection{Case (b)}
In this case $H=\defset{h_r}{r\in\Fqs}\cup\defset{h_r'}{r\in\Fqs}$
with
\[
  h_r=\begin{pmatrix}r & 0\\0 & 1/r\end{pmatrix},\;\;
  h_r'=\begin{pmatrix}0 & -r\\1/r & 0\end{pmatrix}.\] Thus
$\defset{\trace(h)}{h\in H}=\defset{r+1/r}{r\in\Fqs}\cup\set{0}$. Write
$x=\begin{pmatrix}a & b\\c & d\end{pmatrix}$ with $ad-bc=1$. Then
$\trace(h_rx)=ar+d/r$ and $\trace(h_r'x)=-cr+b/r$.

By \eqref{eq:trace1} and Corollary \ref{c:split} we have either
$a=d=0$ or $b=c=0$, but then $x\in H$, a contradiction, or $q=9$ and
$2=ar+d/r=\trace(h_rx)$. But $\gcd(q, \abs{H})=\gcd(9, 8)=1$, which
contradicts \eqref{eq:trace2}.
\subsection{Case (c)} 
To rule out this case, it is convenient to use the isomorphism
\[
  \SL_2(\FF_q)\cong\SU_2(\FF_{q^2})%
  =\defset{%
    \begin{pmatrix}a & b\\-b^q & a^q\end{pmatrix}}{a, b\in\FF_{q^2},
    a^{q+1}+b^{q+1}=1}.
  \]
  Set $N=\defset{r\in\FF_{q^2}}{r^{q+1}=1}$. Note that $r^q=1/r$ for
  $r\in N$. Working in $\SU_2(\FF_{q^2})$, we may assume that
  $H=\defset{h_r}{r\in N}\cup\defset{h_r'}{r\in N}$ with
\[
  h_r=\begin{pmatrix}r & 0\\0 & 1/r\end{pmatrix},\;\;
  h_r'=\begin{pmatrix}0 & -1/r\\r & 0\end{pmatrix}.\]

Thus
$T:=\defset{\trace(h)}{h\in H}=\defset{r+1/r}{r\in
  N}\cup\set{0}$. Write
$x=\begin{pmatrix}a & b\\-b^q & a^q\end{pmatrix}$ with
$a^{q+1}+b^{q+1}=1$. Then $\trace(h_rx)=ar+a^q/r$ and
$\trace(h_r'x)=br+b^q/r$.

Now \eqref{eq:trace1} and Corollary \ref{c:nonsplit} show $a=0$ or
$b=0$, hence $x\in H$.
\subsection{Case (d)}
As $q$ is an odd prime and $q\ge4$ we have
$\gcd(q,\abs{H})=\gcd(q,24)=1$. Thus \eqref{eq:trace2} applies
here. For $T=\defset{\trace(h)}{h\in H}\setminus\set{-2, 2}$ we get
$\prod_{t\in T}(\trace(hx)-t)=0$ for all $h\in H$. Lemma
\ref{l:polyhedral} then yields the contradiction $0=12$.
\subsection{Case (e)}
As $q$ is a prime with $q\equiv\pm1\mod{8}$ and $q\ge4$ we get
$\gcd(q,\abs{H})=\gcd(q,48)=1$. Thus \eqref{eq:trace2} applies and
gives a contradiction as in case (d) using Lemma \ref{l:polyhedral}.
\subsection{Case (f)}
If $q$ is a power of $3$, then $q=9$. In this case $S\cong A_6$ and
$\bH\cong A_5$. So $S$ acts as $A_6$ in its natural action on $6$
points. As this action is doubly transitive, the argument from case
(a) yields a contradiction.

In the remaining cases we have $\gcd(q,\abs{H})=\gcd(q,120)=1$. Thus
\eqref{eq:trace2} applies again and gives a contradiction as in case (d)
using Lemma \ref{l:polyhedral}.
\subsection{Case (g)}
In this case we may assume $\SL_2(\FF_r)\subseteq H$, so
\eqref{eq:trace1} implies\[ \trace(hx)\in\FF_r%
  \text{ for all }h\in\SL_2(\FF_r).
\]
But then $\trace(yx)\in\FF_r$ for every $\FF_r$-linear combination $y$
of matrices from $\SL_2(\FF_r)$. Note that %
$\begin{pmatrix}1 & 0\\0 & 0\end{pmatrix}$, %
$\begin{pmatrix}0 & 1\\0 & 0\end{pmatrix}$, %
$\begin{pmatrix}0 & 0\\1 & 0\end{pmatrix}$, %
$\begin{pmatrix}0 & 0\\0 & 1\end{pmatrix}$ %
are such linear combinations. This forces $x\in\SL_2(\FF_r)$, so
$x\in H$, a contradiction.
\printbibliography
\end{document}